\titleformat{\chapter}[display]
{\normalfont\huge\bfseries}{\chaptertitlename\\thechapter}{20pt}{\Huge}
\titleformat{\subsubsection}[runin]
{\normalfont\normalsize\bfseries}{\thesubsubsection}{1em}{}
\titleformat{\paragraph}[runin]
{\normalfont\normalsize\bfseries}{\theparagraph}{1em}{}
\titleformat{\subparagraph}[runin]
{\normalfont\normalsize\bfseries}{\thesubparagraph}{1em}{}
\titlespacing*{\chapter} {0pt}{50pt}{40pt}
\titlespacing*{\section} {0pt}{3.5ex plus 1ex minus .2ex}{2.3ex plus .2ex}
\titlespacing*{\subsection} {0pt}{3.25ex plus 1ex minus .2ex}{1.5ex plus .2ex}
\titlespacing*{\subsubsection}{0pt}{3.25ex plus 1ex minus .2ex}{1.5ex plus .2ex}
\titlespacing*{\paragraph} {0pt}{3.25ex plus 1ex minus .2ex}{1em}
\titlespacing*{\subparagraph} {\parindent}{3.25ex plus 1ex minus .2ex}{1em}
\subjclass[2000]{Primary 16S32}
\keywords{Weyl algebra; centralizers}
\newtheorem{theorem}{Theorem}[section]
\newtheorem{lemma}[theorem]{Lemma}
\newtheorem{proposition}[theorem]{Proposition}
\newtheorem{corollary}[theorem]{Corollary}
\theoremstyle{definition}
\newtheorem{notations}[theorem]{Notations}
\theoremstyle{remark}
\newtheorem{remark}[theorem]{Remark}
\DeclareMathOperator{\Supp}{Supp}
\DeclareMathOperator{\Z}{Z}
\newcommand{\ov}{\overline}
\numberwithin{equation}{section}
\begin{document}

\title{On the centralizers in the Weyl algebra}

\author{Jorge A. Guccione}
\address{Departamento de Matem\'atica\\ Facultad de Ciencias Exactas y Naturales, Pabell\'on~1, Ciudad Universitaria\\ (1428) Buenos Aires, Argentina.}
\email{vander@dm.uba.ar}
\thanks{Supported by  UBACYT 095, PIP 112-200801-00900 (CONICET) and
PUCP-DAI-2009-0042}

\author{Juan J. Guccione}
\address{Departamento de Matem\'atica\\ Facultad de Ciencias Exactas y Naturales\\ Pabell\'on~1, Ciudad Universitaria\\ (1428) Buenos Aires, Argentina.}
\email{jjgucci@dm.uba.ar}
\thanks{Supported by  UBACYT 095 and PIP 112-200801-00900 (CONICET)}

\author{Christian Valqui}
\address{Pontificia Universidad Cat\'olica del Per\'u - Instituto de Matem\'atica y Ciencias Afi\-nes, Secci\'on Matem\'aticas, PUCP, Av. Universitaria 1801, San Miguel, Lima 32, Per\'u.}
\email{cvalqui@pucp.edu.pe}
\thanks{Supported by PUCP-DAI-2009-0042,  Lucet 90-DAI-L005, SFB 478 U.
M\"unster, Konrad Adenauer Stiftung.}

\thanks{The second author thanks the appointment as a visiting professor ``C\'atedra Jos\'e Tola Pasquel'' and the hospitality during his stay at the PUCP}

\begin{abstract} Let $P,Q$ be elements of the Weyl algebra $W$. We prove that if $[Q,P]=1$, then the centralizer of $P$ is the polynomial algebra $k[P]$.
\end{abstract}

\maketitle

\section*{Introduction} Let $k$ be a characteristic zero field. The Weyl algebra $W$ of index~$1$ over $k$ is the unital associative $k$-algebra generated by elements $X,Y$ and the relation $[Y,X]=1$. This algebra was introduced by Hermann Weyl in order to study the Heisenberg uncertainty principle in quantum mechanics. A detail analysis of $W$ was made in~\cite{D}. Among other things, in this paper the author establishes many interesting properties about the centralizer $\Z(P)$ of an element $P$. Another important paper devoted to the investigation of centralizers of elements in the Weyl algebra is~\cite{B}. In this note we continue the study of $\Z(P)$. Our main result is that if  $P, Q\in W$ satisfy $[Q,P]=1$, then $\Z(P)=k[P]$. Dixmier asked in~\cite{D} if each endomorphism of $W$ is an automorphism. An affirmative answer immediately implies our theorem, by~\cite[Th. 9.1]{D}.

\section{Preliminaries}
In this Section we establish some notations and we recall some results from~\cite{D}. Let $P$ and $Q$ be non zero elements of $W$.

\begin{notations} For $P=\sum a_{ij} X^iY^j$, we write

\begin{itemize}

\smallskip

\item[-] $v(P):= \max\{i-j: a_{ij}\ne 0\}$,

\smallskip

\item[-] $\ell(P):=\displaystyle\sum_{i-j=v(P)} a_{ij} X^iY^j$,

\smallskip

\item[-] $\Supp(P):=\{(i,j): a_{ij}\ne 0\}$,

\smallskip

\item[-] $w(P):=(i_0,i_0-v(P))$ such that $i_0=\max\{i: (i,i-v(P))\in \Supp (\ell(P))\}$,

\smallskip

\item[-] $\ell_t(P):= a_{i_0j_0}X^{i_0}Y^{j_0}$, where $(i_0,j_0)= w(P)$,

\smallskip

\item[-] $\ell_c(P):= a_{i_0j_0}$, where $(i_0,j_0)= w(P)$,

\smallskip

\item[-] $W_+:=\{ P\in W: v(P)>0 \}$,

\smallskip

\item[-] If $\ell_c(P) = 1$ then we will say that $P$ is {\em monic}.

\smallskip

\end{itemize}

\end{notations}

We say that $P$ is aligned with $Q$ and write $P\sim Q$, if
$$
km=jl\qquad \text{with $w(P)=(k,j)$ and $w(Q)=(l,m)$}
$$
Note that $\sim$ is not an equivalence relation (it is so restricted to $\{P:w(P)\ne (0,0)\}$).

\begin{proposition}\label{conmutadores} The following facts hold:

\begin{enumerate}

\smallskip

\item $X^kY^jX^lY^m=X^{k+l}Y^{j+m} + \sum_{i=1}^{\min(j,l)} i!\binom{j}{i}\binom{l}{i} X^{k+l-i} Y^{j+m-i}$.

\smallskip

\item If $P \not\sim Q$, then $[P,Q]\ne 0$ and $w\bigl([P,Q]\bigr) = w(P)+w(Q)-(1,1)$.

\smallskip

\item $[\ell(P),\ell(Q)] = 0$ implies $P\sim Q$.

\smallskip
\end{enumerate}
\end{proposition}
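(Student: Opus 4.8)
The plan is to prove (1) first and then extract (2) and (3) from a single leading-term computation for the commutator of two monomials. For (1), I would first establish the normal-ordering identity $Y^jX^l=\sum_{i=0}^{\min(j,l)} i!\binom{j}{i}\binom{l}{i}X^{l-i}Y^{j-i}$ by induction on $j$, using $[Y,X^{n}]=nX^{n-1}$ in the inductive step and Pascal's rule $\binom{j}{i}+\binom{j}{i-1}=\binom{j+1}{i}$ to match coefficients (after rewriting $i!\binom{l}{i}=l!/(l-i)!$). Multiplying this identity on the left by $X^k$ and on the right by $Y^m$ gives the stated formula, the $i=0$ term being $X^{k+l}Y^{j+m}$.

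The key consequence of (1) that I would isolate is that the product $X^kY^j\cdot X^lY^m$ is supported entirely on the single anti-diagonal $\{(i',j'):i'-j'=(k-j)+(l-m)\}$, with top (highest $X$-degree) monomial $X^{k+l}Y^{j+m}$ of coefficient $1$ and next monomial $X^{k+l-1}Y^{j+m-1}$ of coefficient $jl$. From this I would deduce, for arbitrary nonzero $P,Q$, that $v(PQ)=v(P)+v(Q)$ (the cross terms $\ell(P)Q'+P'\ell(Q)+P'Q'$ lie on strictly lower anti-diagonals); that the only pair of monomials of $\ell(P)$ and $\ell(Q)$ reaching the extreme $X$-degree $k+l$ on the top anti-diagonal is the pair $(\ell_t(P),\ell_t(Q))$, since any other pair has strictly smaller $X$-degree sum; and hence $w(PQ)=w(P)+w(Q)$ and $\ell_c(PQ)=\ell_c(P)\ell_c(Q)\ne 0$. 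In particular $\ell(PQ)=\ell(P)\ell(Q)$, and likewise $\ell(QP)=\ell(Q)\ell(P)$.

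Now write $w(P)=(k,j)$ and $w(Q)=(l,m)$. In $[\ell(P),\ell(Q)]=\ell(P)\ell(Q)-\ell(Q)\ell(P)$ everything again lies on the top anti-diagonal $v(P)+v(Q)$, and the two extremal monomials $X^{k+l}Y^{j+m}$ cancel. The next monomial $X^{k+l-1}Y^{j+m-1}$ receives a contribution only from the pair $(\ell_t(P),\ell_t(Q))$, because any other pair has strictly smaller $X$-degree sum and therefore cannot reach this $X$-degree after the commutator has already removed the leading monomial; its coefficient is $\ell_c(P)\ell_c(Q)(jl-km)$. Since $\ell_c(P)\ell_c(Q)\ne0$, this coefficient is nonzero exactly when $jl\ne km$, i.e.\ exactly when $P\not\sim Q$.

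This yields both remaining claims at once. For (3), contrapositively, $[\ell(P),\ell(Q)]=0$ forces $jl=km$, which is the definition of $P\sim Q$. For (2), when $P\not\sim Q$ the computation shows $[\ell(P),\ell(Q)]\ne 0$ with leading monomial at $(k+l-1,j+m-1)$; combining this with $\ell(PQ)=\ell(P)\ell(Q)$ and $\ell(QP)=\ell(Q)\ell(P)$ gives $\ell([P,Q])=[\ell(P),\ell(Q)]\ne0$, so $[P,Q]\ne0$ and $w([P,Q])=w(P)+w(Q)-(1,1)$. I expect the main obstacle to be the bookkeeping of the previous paragraph: verifying that no monomial pair other than $(\ell_t(P),\ell_t(Q))$ contributes to the $X^{k+l-1}Y^{j+m-1}$ term and that this term does not cancel. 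This single verification is what simultaneously pins down $w([P,Q])$ and the alignment dichotomy.
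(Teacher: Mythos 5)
Your proof is correct and takes essentially the same route as the paper: establish the normal-ordering formula by induction on $j$, then read off the coefficient $(jl-km)\,\ell_c(P)\ell_c(Q)$ of $X^{k+l-1}Y^{j+m-1}$ in the commutator, which is nonzero exactly when $P\not\sim Q$, yielding item~(2) with item~(3) as its contrapositive. Your explicit bookkeeping (the $i=0$ terms of each pair of monomials cancel within the commutator, so only the pair $(\ell_t(P),\ell_t(Q))$ can contribute at $X$-degree $k+l-1$) is precisely the verification the paper compresses into ``from this fact and item~(1) it follows.''
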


\begin{proof} (1)\enspace It suffices to prove it when $k=m=0$. In this case it follows easily by induction on $j$, using that
$$
[Y,X^l] = lX^{l-1}\quad\text{and}\quad [Y^j,X^l] = [Y,X^l]Y^{j-1}+Y[Y^{j-1},X^l].
$$

\smallskip

\noindent (2)\enspace Let $w(P) = (k,j)$ and $w(Q) = (l,m)$. Since $P\not\sim Q$, it follows from item~(1) that
$$
\ell_t\bigl([\ell_t(P),\ell_t(Q)]\bigr) = \Biggl(\!\!\binom{j}{1}\binom{l}{1} - \binom{m}{1} \binom{k}{1}\!\!\Biggr) \ell_c(P)\ell_c(Q)X^{k+l-1}Y^{j+m-1}.
$$
From this fact and item~(1) it follows now that
$$
\ell_t\bigl([P,Q]\bigr) = \Biggl(\!\!\binom{j}{1}\binom{l}{1} - \binom{m}{1} \binom{k}{1}\!\! \Biggr) \ell_c(P)\ell_c(Q)X^{k+l-1}Y^{j+m-1},
$$
and so $w\bigl([P,Q]\bigr) = w(P)+w(Q)-(1,1)$.

\smallskip

\noindent (3)\enspace If $P\not\sim Q$, then $\ell(P)\not\sim \ell(Q)$, since $w(\ell(P)) = w(P)$ and $w(\ell(Q)) = w(Q)$. Thus, by item~(2) we have $[\ell(P),\ell(Q)]\ne 0$.
\end{proof}

From item~(1) of Proposition~\ref{conmutadores} it follows immediately that
\begin{align*}
& v(PQ)=v(P)+v(Q),\\
& w(PQ)=w(P)+w(Q),\\
&\ell(PQ)=\ell(P)\ell(Q),\\
&\ell_t(PQ)=\ell_t(\ell_t(P)\ell_t(Q)),\\
&\ell_c(PQ)=\ell_c(P)\ell_c(Q).
\end{align*}

\begin{remark} Let $\overline{v}(P):=\max \{j-i : a_{ij}\ne 0\}$. All the results in this section admit symmetric versions with $\overline{w},\overline{\ell},\overline{\ell}_t, \overline{\ell}_c$ and $\overline{W}_+$, defined mimicking the definition of $w,\ell,\ell_t, \ell_c$ and $W_+$, using $\overline{v}$ instead of $v$. In the sequel, when we introduce any symbol denoting an object, the same symbol with an overline will denote the symmetric object.
\end{remark}

For each $j\in \mathds{Z}$ we set
$$
W_j:=\{P\in W\setminus\{0\} : P=\ell(P) \text{ and } v(P)=j\}\cup \{0\}.
$$
Clearly $W_j$ is a subvector space of $W$ and $W$ is a $\mathds{Z}$-graded algebra with $W_j$ the homogeneous component of degree $j$. It is obvious that $\overline{W}_j=W_{-j}$.

\section{The structure of the centralizer}
For $P\in W$, we let $\Z(P)$ denote the centralizer of $P$. That is, the subalgebra of $W$ consisting of all the $Q$'s such that $PQ = QP$. The main purpose of this Section is to prove that if there exist $Q\in W$ such that $[Q,P] = 1$, then $\Z(P) = k[P]$.

\begin{lemma}\label{caracterizacion de Wn} The following facts hold:

\begin{enumerate}

\smallskip

\item For each $j\in \mathds{N}_0$ and $f\in k[Z]$,
$$
f(XY)X^j=X^jf(XY+j)\quad\text{and}\quad Y^jf(XY)=f(XY+j)Y^j.
$$

\smallskip

\item For each $j\in \mathds{Z}$,
$$
W_j=\begin{cases}
X^jk[XY] & \text{if $j\ge 0$,}\\
k[XY]Y^{-j} & \text{if $j< 0$.}
\end{cases}
$$
\end{enumerate}
\end{lemma}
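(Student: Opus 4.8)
The plan is to prove item~(1) first by a pair of elementary inductions, and then to use it as the engine for item~(2).

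For item~(1) I would begin from the defining relation $[Y,X]=1$, i.e.\ $YX=XY+1$, which immediately gives the base case $(XY)X=X(YX)=X(XY+1)$. From the monomial identity $(XY)^nX=X(XY+1)^n$, proved by an easy induction on $n$, I get by $k$-linearity the relation $f(XY)X=Xf(XY+1)$ for every $f\in k[Z]$. A second induction, this time on $j$, then yields
$$
f(XY)X^j=X^jf(XY+j),
$$
since $f(XY)X^{j}\cdot X=X^jf(XY+j)X=X^{j+1}f(XY+j+1)$. The companion identity $Y^jf(XY)=f(XY+j)Y^j$ follows by the symmetric computation, starting from $Y(XY)=(YX)Y=(XY+1)Y$. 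Neither induction is subtle; the only thing to watch is the shift in the argument of $f$.

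For item~(2) the key structural consequence I would extract from~(1) is the falling-factorial identity
$$
X^nY^n=\prod_{i=0}^{n-1}(XY-i),
$$
proved by induction on $n$. Writing $X^{n+1}Y^{n+1}=X\,(X^nY^n)\,Y$ and setting $p(Z)=\prod_{i=0}^{n-1}(Z-i)$, I would push the outer $Y$ through the polynomial $p(XY)$ using the second identity of~(1) in the form $p(XY)Y=Y\,p(XY-1)$, so that $X\,p(XY)\,Y=(XY)\,p(XY-1)$; since $(XY)p(XY-1)$ is again a polynomial in $XY$, it collapses to $\prod_{i=0}^{n}(XY-i)$, completing the induction. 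In particular each $X^nY^n$ lies in $k[XY]$, and conversely, because the top term of $\prod_{i=0}^{n-1}(XY-i)$ is $(XY)^n$, the passage between $\{(XY)^n\}$ and $\{X^nY^n\}$ is unitriangular; hence $k[XY]$ is exactly the $k$-linear span of $\{X^nY^n:n\ge 0\}$.

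With this identity in hand both inclusions for $j\ge 0$ are immediate. Since $XY\in W_0$ and $W$ is $\mathds{Z}$-graded, $k[XY]\subseteq W_0$, whence $X^jk[XY]\subseteq X^jW_0\subseteq W_j$. For the reverse inclusion I would invoke that the standard monomials $X^iY^{i'}$ form a $k$-basis of $W$, so any $P\in W_j$ is a finite combination of the degree-$j$ monomials $X^{n+j}Y^n=X^j(X^nY^n)$; as each $X^nY^n\in k[XY]$, this gives $P\in X^jk[XY]$. The case $j<0$ is entirely symmetric: one writes the degree-$j$ monomials as $X^nY^{n-j}=(X^nY^n)Y^{-j}$ and runs the same argument on the right, using $Y^{-j}\in W_j$. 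The main (and essentially only) obstacle is getting the falling-factorial identity right, since everything else reduces to the grading of $W$ and the linear independence of the standard monomials.
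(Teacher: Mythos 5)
Your argument is correct and complete; note, however, that the paper contains no proof of this lemma at all---it simply defers to Dixmier via the citation \cite[3.2, 3.3]{D}---so your proposal is a self-contained replacement rather than a variant of an argument in the text. Both inductions in item~(1) check out: the one delicate point, the shift in the argument of $f$, is handled correctly in the step $f(XY)X^{j+1}=X^jf(XY+j)X=X^{j+1}f(XY+j+1)$. Your falling-factorial identity $X^nY^n=\prod_{i=0}^{n-1}(XY-i)$ is the right engine for item~(2), and the inductive step $X\,p(XY)\,Y=XY\,p(XY-1)=\prod_{i=0}^{n}(XY-i)$ uses item~(1) in exactly the correct shifted form $p(XY)Y=Y\,p(XY-1)$. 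Two small remarks. First, the unitriangularity observation is harmless but superfluous: the inclusion $W_j\subseteq X^jk[XY]$ (for $j\ge 0$) needs only the membership $X^nY^n\in k[XY]$, since $P\in W_j$ expands in the PBW basis as $\sum_n c_nX^{n+j}Y^n=X^j\sum_n c_n X^nY^n$, while the reverse inclusion needs only $k[XY]\subseteq W_0$, which you correctly extract from the grading; the exact span statement $k[XY]=\operatorname{span}_k\{X^nY^n\}$ is never used. Second, for comparison with the cited source: Dixmier's analysis proceeds via the adjoint action, using that $[XY,X^aY^b]=(a-b)X^aY^b$, so that $W_j$ is recognized as the $j$-eigenspace of $\operatorname{ad}(XY)$; your route instead works directly with the basis of standard monomials. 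What the citation buys the paper is brevity; what your computation buys is a genuinely elementary, self-contained proof whose only inputs are the relation $YX=XY+1$ and the linear independence of the monomials $X^iY^j$, both already implicit in the paper's setup.
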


\begin{proof} See~\cite[3.2, 3.3]{D}.
\end{proof}

\begin{theorem}\label{estructura del centralizador de un elemento homogeneo} Let $P\in W_r\setminus k$. If $r=0$, then $\Z(P)=k[XY]$. On the other hand, if $r\ne 0$, then $\dim_k\bigl(\Z(P)\cap W_j\bigr)\le 1$ for each $j\in \mathds{Z}$.
\end{theorem}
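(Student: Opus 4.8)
The plan is to use that $P$ is homogeneous in order to reduce the whole problem to a study of the individual graded pieces $\Z(P)\cap W_j$. First I would note that if $P\in W_r$ and $Q=\sum_j Q_j$ with $Q_j\in W_j$, then $[P,Q]=\sum_j[P,Q_j]$ where each $[P,Q_j]$ lies in $W_{r+j}$; since these land in pairwise distinct homogeneous components, $[P,Q]=0$ is equivalent to $[P,Q_j]=0$ for all $j$. Hence $\Z(P)$ is a graded subspace, $\Z(P)=\bigoplus_j\bigl(\Z(P)\cap W_j\bigr)$, and it suffices to analyse each intersection.

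For $r=0$ I would write $P=f(XY)$ with $f\in k[Z]$ of degree $\ge 1$ (legitimate because $P\in W_0\setminus k$ and $W_0=k[XY]$ by Lemma~\ref{caracterizacion de Wn}(2)). A homogeneous element of degree $j\ge 0$ has the form $X^jg(XY)$, and Lemma~\ref{caracterizacion de Wn}(1) gives
$$
[f(XY),X^jg(XY)]=X^j\bigl(f(XY+j)-f(XY)\bigr)g(XY),
$$
with the mirror computation for $j<0$. As $k$ has characteristic zero, a nonconstant polynomial never satisfies $f(Z+j)=f(Z)$ for $j\ne 0$, so this vanishes only when $g=0$; thus $\Z(P)\cap W_j=0$ for $j\ne 0$, while clearly $\Z(P)\cap W_0=W_0=k[XY]$. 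Reassembling the pieces yields $\Z(P)=k[XY]$.

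For $r\ne 0$ the key point is that alignment fixes the leading monomial. By the symmetry remark I may assume $r>0$, so that $w(P)=(p,p-r)$ with $p\ge r>0$. If $Q\in\Z(P)\cap W_j$ is nonzero, then $[P,Q]=0$, and the contrapositive of Proposition~\ref{conmutadores}(2) forces $P\sim Q$; writing $w(Q)=(q,q-j)$, the alignment condition $p(q-j)=(p-r)q$ collapses to $q=pj/r$. Consequently the first coordinate of $w(Q)$, and therefore the monomial $\ell_t(Q)/\ell_c(Q)=X^qY^{q-j}$, is one and the same for \emph{every} nonzero element of $\Z(P)\cap W_j$. Given two nonzero $Q_1,Q_2$ in this space, I would then form $Q:=\ell_c(Q_2)Q_1-\ell_c(Q_1)Q_2$, which again lies in $\Z(P)\cap W_j$ but has its leading terms cancelled; if $Q\ne 0$ its $w$ would have first coordinate strictly below $q$, contradicting the previous sentence. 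Hence $Q=0$, so $Q_1$ and $Q_2$ are proportional and $\dim_k\bigl(\Z(P)\cap W_j\bigr)\le 1$.

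The step I expect to require the most care is the degenerate bookkeeping in this last argument. One must check that $w(P)\ne(0,0)$ (which is why I reduce to $r>0$, forcing $p>0$), and verify that the formula $q=pj/r$ is consistent with the claimed bound in the boundary situations: when $j=0$ it should pin $Q$ down to a scalar, so that $\Z(P)\cap W_0=k$, and when $pj/r$ is not an admissible exponent the intersection is simply $0$. Handling these cases uniformly—rather than as exceptions—is what makes the descent on the first coordinate of $w$ clean.
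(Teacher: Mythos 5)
Your proof is correct, but for the main case $r\ne 0$ it takes a genuinely different route from the paper's. Both arguments begin identically: homogeneity reduces everything to the graded pieces, and the contrapositive of Proposition~\ref{conmutadores}(2) (equivalently, item~(3)) forces any nonzero $Q\in\Z(P)\cap W_j$ to be aligned with $P$. At that point the paper invokes Lemma~\ref{caracterizacion de Wn} to write $P=X^rf(XY)$ and $Q=X^jg(XY)$, converts $[P,Q]=0$ into the functional equation $f(Z+j)/f(Z)=g(Z+r)/g(Z)$, and proves that $g$ is unique up to a scalar by evaluating along the arithmetic progression $n_0,n_0+r,n_0+2r,\dots$ (this is where characteristic zero enters for them). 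You never write the functional equation: alignment pins down $w(Q)=(pj/r,\,pj/r-j)$ uniformly over all nonzero $Q$ in the piece, and then a leading-coefficient cancellation finishes — if $Q_1,Q_2$ were independent, $\ell_c(Q_2)Q_1-\ell_c(Q_1)Q_2$ would be a nonzero element of $\Z(P)\cap W_j$ whose $w$ has strictly smaller first coordinate, contradicting the uniqueness of $w$. Your descent is shorter, needs no normal form for $W_j$ in the $r\ne0$ case, and treats the boundary values of $j$ (namely $j\le 0$, or $pj/r$ not an admissible exponent, where the piece is $0$ or $k$) uniformly, whereas the paper asserts ``$j>0$'' and leaves $r<0$ to the reader; what the paper's computation buys instead is an explicit criterion for when $\Z(P)\cap W_j\ne 0$, i.e.\ a description of exactly which $g$ can occur. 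In the case $r=0$ the roles are reversed: the paper argues via alignment ($P\sim Q$ forces $v(Q)=0$), while you compute $[f(XY),X^jg(XY)]=X^j\bigl(f(XY+j)-f(XY)\bigr)g(XY)$ directly; this is valid since $f(Z+j)-f(Z)\ne 0$ for nonconstant $f$ and $j\ne 0$ in characteristic zero and since $W$ is a domain, so both versions go through.
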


\begin{proof} First assume that $r=0$. It is evident that $k[XY]\subseteq \Z(P)$. In order to check the opposite inclusion, we note that $\Z(P)$ is a graded subalgebra of $W$. Let $Q\in \Z(P)\setminus\{0\}$ be an homogeneous element. By item~(3) of Proposition~\ref{conmutadores}, we have $P\sim Q$ and so $v(Q) = 0$. Now assume that $r> 0$. Write $P=X^rf(XY)$ and take
$$
Q\in \Z(P)\cap W_j\setminus \{0\}.
$$
By item~(3) of Proposition~\ref{conmutadores}, we know that $P\sim Q$. So $j>0$ and $Q=X^jg(XY)$ for some $g\in k[Z]$. A direct computation using that
$$
f(XY) X^j=X^j f(XY+j)\quad\text{ and }\quad g(XY) X^r=X^r g(XY+r)
$$
shows that $[P,Q]=0$ if and only if
$$
\frac{f(Z+j)}{f(Z)}=\frac{g(Z+r)}{g(Z)}.
$$
Hence, in order to finish the proof in the case $r>0$, it suffices to check that if $g_1\ne 0$ satisfies
$$
\frac{g_1(Z+r)}{g_1(Z)}=\frac{g(Z+r)}{g(Z)},
$$
then $g_1=\lambda g$ for some $\lambda\in k$. Let $n_0\in \mathds{Z}$ be such that $g(n)\ne 0$ and $g_1(n)\ne 0$ for all $n\ge n_0$. Set $\lambda:=g_1(n_0)/g(n_0)$. We have
$$
g_1(n_0+r)=g_1(n_0)g(n_0+r)/g(n_0)=\lambda g(n_0+r).
$$
Iterating the same argument we obtain $g_1(n_0+ir)=\lambda g(n_0+ir)$ for all $i>0$. So the polynomials $g_1$ and $\lambda g$ coincide. We left the case $r<0$ to the reader. Use the symmetric version of Proposition~\ref{conmutadores}.
\end{proof}

For $P\in W_+$, there exists exactly one pair $(i,j)\in \mathds{N}_0^2$, with $\gcd(i,j)=1$, such that $w(P) = (ri,rj)$ for an $r>0$. We define
$$
\Z_l(P) = \{Q\in \Z(P)\setminus\{0\}: w(Q)=(li,lj)\}
$$
for each $l\ge 0$. By item~(3) of Proposition~\ref{conmutadores},
\begin{equation}
\Z(P)\setminus\{0\} = \bigcup_{l\ge 0} \Z_l(P).\label{eq5}
\end{equation}
In a similar way we define $\ov{\Z}_l(P)$ for every $P\in \ov{W}_+$ and $l\ge 0$.

\begin{theorem}\label{centralizador} For all $P\in W_+$ with $w(P) = (ri,rj)$ as above, there exist elements $R_l\in \Z_l(P)$ such that:
\begin{enumerate}

\smallskip

\item $\Z(P) = \bigoplus_{l\in L} k R_l$, with $L=L(P)\subset \mathds{N}_0$,

\smallskip

\item $\ell_t(R_l)=X^{li}Y^{lj}$,

\smallskip

\item $\ell(R_l)\ell(R_h)=\ell(R_{l+h})$.
\end{enumerate}
For $P\in \overline{W}_+$, the symmetric result holds.
\end{theorem}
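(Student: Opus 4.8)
The plan is to transfer the problem to the homogeneous setting through the leading-part map $\ell$ and then invoke Theorem~\ref{estructura del centralizador de un elemento homogeneo}. Set $d:=i-j$; since $P\in W_+$ and $r>0$ we have $v(P)=rd>0$, so $d\ge 1$. The crucial observation is that $\ell$ carries $\Z(P)$ into $\Z(\ell(P))$: if $Q\in\Z(P)$ then $PQ=QP$, hence, using the multiplicativity of $\ell$, $\ell(P)\ell(Q)=\ell(PQ)=\ell(QP)=\ell(Q)\ell(P)$, so $[\ell(P),\ell(Q)]=0$. Since $\ell(P)\in W_{rd}\setminus k$ with $rd\ne 0$, Theorem~\ref{estructura del centralizador de un elemento homogeneo} yields $\dim_k\bigl(\Z(\ell(P))\cap W_n\bigr)\le 1$ for every $n\in\mathds{Z}$. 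In particular, for $Q\in\Z_l(P)$ we have $v(Q)=ld$, so $\ell(Q)$ lies in the at-most-one-dimensional space $\Z(\ell(P))\cap W_{ld}$.

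Next I would set $L:=\{l\ge 0:\Z_l(P)\ne\emptyset\}$ and, for each $l\in L$, pick $Q\in\Z_l(P)$ and rescale it by $\ell_c(Q)^{-1}$ to obtain a monic element $R_l\in\Z_l(P)$. Then $\ell_t(R_l)=\ell_c(R_l)X^{li}Y^{lj}=X^{li}Y^{lj}$, which is item~(2); moreover $\ell(R_l)$ spans $\Z(\ell(P))\cap W_{ld}$, being a nonzero vector in an at-most-one-dimensional space. For item~(1), the $R_l$ have pairwise distinct $v$-degrees $ld$, hence are linearly independent. For the spanning claim, take $Q\in\Z(P)\setminus\{0\}$; by~\eqref{eq5} it lies in some $\Z_l(P)$, so $l\in L$ and $\ell(Q)=c\,\ell(R_l)$ for some $c\in k$ by one-dimensionality. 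Then $Q-cR_l\in\Z(P)$ has $v$-degree strictly below $ld$, so by~\eqref{eq5} it is either $0$ or lies in some $\Z_{l'}(P)$ with $l'<l$. Since these indices are strictly decreasing nonnegative integers, the descent terminates and $Q\in\bigoplus_{l\in L}kR_l$.

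Finally, for item~(3): as $\Z(P)$ is a subalgebra, $R_lR_h\in\Z(P)$, and $w(R_lR_h)=w(R_l)+w(R_h)=\bigl((l+h)i,(l+h)j\bigr)$, so $R_lR_h\in\Z_{l+h}(P)$ and $l+h\in L$. Both $\ell(R_l)\ell(R_h)=\ell(R_lR_h)$ and $\ell(R_{l+h})$ lie in the one-dimensional space $\Z(\ell(P))\cap W_{(l+h)d}$, hence are proportional. Using $\ell_t(R_lR_h)=\ell_t\bigl(\ell_t(R_l)\ell_t(R_h)\bigr)$ and item~(1) of Proposition~\ref{conmutadores}, I compute $\ell_t(R_lR_h)=\ell_t\bigl(X^{li}Y^{lj}X^{hi}Y^{hj}\bigr)=X^{(l+h)i}Y^{(l+h)j}=\ell_t(R_{l+h})$; since both leading coefficients equal $1$, the proportionality constant is $1$ and the two elements coincide.

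I expect the main obstacle to be the spanning step of item~(1): the reduction must stay confined to nonnegative indices and must terminate, which is exactly what~\eqref{eq5} guarantees, together with the bridge $\ell\bigl(\Z(P)\bigr)\subseteq\Z(\ell(P))$ that reduces the whole question to the one-dimensionality supplied by Theorem~\ref{estructura del centralizador de un elemento homogeneo}. The statement for $P\in\ov{W}_+$ follows by rerunning the same argument with the symmetric objects $\ov{v},\ov{\ell},\ov{w}$ and the symmetric version of Proposition~\ref{conmutadores}.
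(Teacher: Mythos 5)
Your proposal is correct and takes essentially the same route as the paper's proof: both pass to $\Z(\ell(P))$ via the multiplicativity of $\ell$, use the one-dimensionality of $\Z(\ell(P))\cap W_n$ from Theorem~\ref{estructura del centralizador de un elemento homogeneo} to choose monic representatives $R_l$, prove the spanning part of item~(1) by the same descending induction on $w$ using~\eqref{eq5}, and settle item~(3) by proportionality plus comparison of the monic leading terms. The only difference is that you make explicit some steps the paper leaves implicit, namely the inclusion $\ell\bigl(\Z(P)\bigr)\subseteq\Z(\ell(P))$, the fact that $l+h\in L$ so that $R_{l+h}$ is defined, and the direct computation of $\ell_t(R_lR_h)$.
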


\begin{proof} Let $L(P) = \{l\ge 0: \Z_l(P)\ne \emptyset\}$. For all $l\in L(P)$, we fix an $R_l\in \Z_l(P)$ with $\ell_t(R_l)=X^{li}Y^{lj}$. Clearly $\bigoplus_{l\in L} k R_l\subseteq \Z(P)$. We now prove the opposite inclusion. Let $Q\in \Z(P)\setminus\{0\}$ with $w(Q)=(hi,hj)$. Since
$$
\ell(Q),\ell(R_h)\in \Z(\ell(P))\cap W_{h(i-j)},
$$
by Theorem~\ref{estructura del centralizador de un elemento homogeneo} there exists $\lambda_h \in k$ such that $\ell(Q) = \lambda_h\ell(R_h)$. If $Q = \lambda_h R_h$, then $Q\in \bigoplus_{l \in L} k R_l$. Otherwise, $Q-\lambda_h R_h\in \Z(P)$ and, by~\eqref{eq5},
$$
w(Q-\lambda_h R_h)=(h'i,h'j),
$$
with $0\le h'<h$. The same argument yields $\lambda_{h'}$ such that $Q = \lambda_h R_h + \lambda_{h'}R_{h'}$ or
$$
Q-\lambda_h R_h-\lambda_{h'}R_{h'}\in \Z(P)\quad\text{and}\quad w(Q-\lambda_h R_h - \lambda_{h'}R_{h'}) = (h''i,h''j),
$$
with $0\le h''<h'$. Iterating the argument we obtain that $Q \in \bigoplus_{l\in L} k R_l$. So items~(1) and~(2) hold. Finally, since
$$
\ell(R_l)\ell(R_h),\ell(R_{l+h})\in \Z(\ell(P))\cap W_{(l+h)(i-j)},
$$
again, by Theorem~\ref{estructura del centralizador de un elemento homogeneo}, there exists $\lambda\in k$ such that $\ell(R_l)\ell(R_h)=\lambda \ell(R_{l+h})$. By item~(2), necessarily $\lambda=1$.

When $P\in \ov{W}_+$ the same argument works.
\end{proof}

\begin{remark}\label{R_0=1} We assert that $R_0 = 1$. Otherwise $R_0-1\in \Z(P)\setminus\{0\}$, and so $R_0-1\sim P$. But this is impossible since $v(P)>0$ and $v(R_0-1)<0$.
\end{remark}

\begin{remark} Let $P\in W\setminus\{0\}$. If $\overline v(P)\le 0$ and $v(P)\le 0$, then, necessarily, $\overline v(P)=0=v(P)$ and $P\in W_0$. In this case $\Z(P)=k[XY]$ if $P\notin k$ and $\Z(P)=W$ if $P\in k$.
\end{remark}

\begin{remark}\label{caso homogeneo} Note that in general $R_lR_h \ne R_{l+h}$. So item~(1) does not yield a graduation on $\Z(P)$. However if $P\in W_+$ is homogeneous (that is $P = \ell(P)$), then we can assume that $R_l = \ell(R_l)$. Hence, by item~(3) of the above theorem, $\Z(P)$ is graded and, therefore, the map $\varphi:\Z(P) \to k[Z]$, given by $\varphi(R_l) = Z^l$, is a monomorphism of graded algebras. Consequently $\Z(P)$ is a monomial algebra. A similar result holds for $P\in \overline{W}_+$, homogeneous.
\end{remark}

\begin{lemma}\label{submonoide} Let $L$ be an additive submonoid of $\mathds{N}_0$. Let $r_0 = \min(L\setminus \{0\})$ and $d = \gcd(L)$. For $0\le r <r_0$, let $L_r := \{l\in L:l\equiv r \pmod{r_0}\}$. Then
$$
L_r \ne \emptyset \Leftrightarrow d\mid r\qquad\text{and}\qquad L = \bigcup_{r=0}^{r_0-1} L_r.
$$
\end{lemma}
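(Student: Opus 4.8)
The plan is to separate the two assertions, dispose of the set equality and the easy implication of the biconditional, and then concentrate the real work on a numerical-semigroup fact that drives the converse. The identity $L=\bigcup_{r=0}^{r_0-1}L_r$ is immediate, since every $l\in L$ has a unique residue $r$ with $0\le r<r_0$ and $l\equiv r\pmod{r_0}$, so it lies in exactly one $L_r$ (here the hypotheses tacitly require $L\ne\{0\}$, so that $r_0$ exists). The forward implication $L_r\ne\emptyset\Rightarrow d\mid r$ is also direct: if $l\in L_r$ then $l=r+kr_0$ for some $k\in\mathds{N}_0$, and since $r_0\in L$ and $d=\gcd(L)$ both $l$ and $r_0$ are divisible by $d$, whence so is $r=l-kr_0$.

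The substantive direction is $d\mid r\Rightarrow L_r\ne\emptyset$, and the key step I would isolate is the classical fact that $L$ contains every sufficiently large multiple of $d$. To establish it, I would pick finitely many elements of $L$ whose gcd equals $d$ (a finite subset already realizes $\gcd(L)$) and use B\'ezout to write $d$ as an integer combination of them; separating the positive and negative coefficients yields $A,B\in L$ with $A=B+d$. Writing $B=db$ and decomposing a multiple $n=qb+s$ with $0\le s<b$ and $q\ge s$ gives $nd=(q-s)B+s(B+d)\in L$, so every multiple $nd$ with $n$ past a fixed bound belongs to $L$.

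Granting this, the converse is short: if $d\mid r$, then each $r+mr_0$ is a multiple of $d$, because $d\mid r$ and $d\mid r_0$, and these grow without bound, so $r+mr_0\in L$ for $m$ large; as $r+mr_0\equiv r\pmod{r_0}$, such an element witnesses $L_r\ne\emptyset$. The only genuine obstacle is the numerical-semigroup lemma above; everything else is elementary bookkeeping with congruences and divisibility.
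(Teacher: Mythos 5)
Your argument is correct, but it routes through a stronger statement than the paper does. Both proofs dispose of the set equality and the implication $L_r\ne\emptyset\Rightarrow d\mid r$ in the same trivial way, and both hinge on the same B\'ezout-type step: since the group generated by $L$ is $d\mathds{Z}$, there exist $l_0,l_1\in L$ (your $B$ and $A=B+d$) with $l_1-l_0=d$. From there the paper is purely finitary: with $n_0=r_0/d$ it simply lists the $n_0$ elements $l_0(n_0-i)+il_1=l_0n_0+id$ for $i=0,\dots,n_0-1$, which are $n_0$ consecutive multiples of $d$ and therefore, being pairwise distinct modulo $r_0=dn_0$, land one in each class $L_r$ with $d\mid r$; no asymptotics are needed. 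You instead prove the classical numerical-semigroup fact that $L$ contains \emph{every} sufficiently large multiple of $d$ (via $nd=(q-s)B+s(B+d)$ for $n=qb+s$) and then take $r+mr_0$ with $m$ large as a witness. Your detour costs some bookkeeping the paper avoids --- you need $q\ge s$, hence a bound of the order $n\ge b^2$, and you must separately handle the degenerate case $B=0$ (i.e.\ $b=0$), where the division $n=qb+s$ breaks down but $d=A\in L$ makes everything immediate --- while it buys a genuinely stronger and reusable conclusion, namely that $L$ meets all but finitely many elements of $d\mathds{N}_0$. It is worth noticing that the two proofs use the same mechanism at bottom: the paper's listed elements $(n_0-i)l_0+il_1$ are nonnegative combinations of $B$ and $A=B+d$ of exactly the kind you form, specialized to the $n_0$ consecutive multiples of $d$ starting at $l_0n_0$; your version generalizes the device, the paper's pares it down to the minimum the lemma requires. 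Your remark that the statement tacitly assumes $L\ne\{0\}$ (so that $r_0$ exists) is accurate and is indeed harmless in context, since the lemma is applied to $L(P)$, which contains positive elements.
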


\begin{proof} First note that the second assertion is trivial and that $L_r\ne \emptyset$ clearly implies that $d\mid r$. We now prove the opposite implication. Let $n_0 := r_0/d$.
Since the group generated by $L$ is $d\mathds{Z}$, there exist $l_0,l_1\in L$ such that $d = l_1-l_0$. So
$$
l_0n_0,l_0(n_0-1)+l_1,l_0(n_0-2)+2l_1,\dots,l_0+(n_0-1)l_1
$$
are elements in the $n_0$ different $L_r$ with $d\mid r$, as desired.
\end{proof}

For a fixed $P\in W_+$ we consider $d=\gcd\{l:l\in L(P)\}$ and set $\deg Q = l/d$ for $Q\in \Z_l(P)$. Note that

\begin{itemize}

\smallskip

\item[-] $\deg(Q_1)>\deg(Q_2)$ if and only if $v(Q_1)>v(Q_2)$,

\smallskip

\item[-] for a polynomial $T\in k[S]\setminus k$, where $S\in \Z(P)$,
$$
\deg T' = \deg T-\deg S,
$$
in which $T'$ denotes the usual derivative of the polynomial $T$.

\smallskip

\end{itemize}
In the sequel we will use these facts again and again without explicit mention.

\smallskip

For $P\in W_+$, choose an element $S_0$ of minimal degree in $\Z(P)\setminus k$ and set $n_0:=\deg S_0 $. For each $0<l<n_0$, set
$$
\widehat \Z_l(P):=\{ R\in \Z(P)\setminus\{0\}:\deg(R)\equiv l \pmod{n_0}\}.
$$
Since $L(P)$ is an additive submonoid of $\mathds{N}_0$, from Lemma~\ref{submonoide} it follows that the $\widehat \Z_l(P)$'s are not empty. Fix $S_l\in \widehat \Z_l(P)$ of minimal degree.

\begin{corollary}\label{Z(P) como suma directa} We have:
\begin{equation}\label{eq1}
\Z(P)=k[S_0]\oplus k[S_0]S_1\oplus \cdots \oplus k[S_0]S_{n_0-1}.
\end{equation}
For $P\in \overline{W}_+$ the symmetric result holds.
\end{corollary}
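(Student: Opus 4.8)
The plan is to establish the two halves of the direct-sum statement separately: that the sum $k[S_0]+k[S_0]S_1+\cdots+k[S_0]S_{n_0-1}$ exhausts $\Z(P)$, and that it is direct. The engine for both is a single observation which I would record first: if $Q_1,Q_2\in\Z(P)\setminus\{0\}$ have the same degree, then $\ell(Q_1)$ and $\ell(Q_2)$ are proportional. Indeed, equal degree forces $v(Q_1)=v(Q_2)=:j$, so $\ell(Q_1),\ell(Q_2)\in W_j$; moreover $[P,Q_t]=0$ together with the multiplicativity of $\ell$ gives $\ell(P)\ell(Q_t)=\ell(Q_t)\ell(P)$, so both lie in $\Z(\ell(P))\cap W_j$. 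Since $\ell(P)\in W_{v(P)}\setminus k$ with $v(P)>0$, Theorem~\ref{estructura del centralizador de un elemento homogeneo} bounds this space by dimension $1$, forcing the proportionality.

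For directness, recall $\deg S_0=n_0$ and $\deg S_l\equiv l\pmod{n_0}$ for $0<l<n_0$. Because $w$, and hence $\deg$, is additive on products, every nonzero element of $k[S_0]S_l$ has degree $\equiv l\pmod{n_0}$, while every nonzero element of $k[S_0]$ has degree $\equiv 0$. Thus nonzero elements taken from distinct summands have degrees in distinct residue classes mod $n_0$, so in particular pairwise distinct degrees. A nonzero combination $\sum_l f_l(S_0)S_l$ from distinct summands therefore has a unique term of maximal degree; since $\deg(\cdot)>\deg(\cdot)\Leftrightarrow v(\cdot)>v(\cdot)$, the leading part $\ell$ of that term survives uncancelled, so the combination is nonzero. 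This yields directness.

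For spanning, which is the main step, I would induct on $\deg Q$ to show every $Q\in\Z(P)\setminus\{0\}$ lies in the sum. If $\deg Q=0$ then $w(Q)=(0,0)$, and by Remark~\ref{R_0=1} (that is, $R_0=1$) we get $Q\in k\subseteq k[S_0]$. If $\deg Q=q>0$, let $l\in\{0,\dots,n_0-1\}$ be the residue of $q$ mod $n_0$. When $l=0$ set $m=q/n_0$ and $T=S_0^m$; when $l\ne 0$, minimality of $\deg S_l$ among degrees $\equiv l$ gives $\deg S_l\le q$, so I set $m=(q-\deg S_l)/n_0\ge 0$ and $T=S_0^m S_l$. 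In either case $T$ is a nonzero element of the relevant summand with $\deg T=q=\deg Q$. By the opening observation there is $c\in k$ with $\ell(Q)=c\,\ell(T)$, so the top homogeneous components cancel and $Q-cT$ is either $0$ or of strictly smaller degree; the induction hypothesis then places $Q-cT$, hence $Q$, in the sum.

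The one place demanding care, and the main obstacle, is the spanning induction: it rests on combining the leading-term proportionality (to kill the top term) with the minimality built into the definition of $S_l$ (to guarantee $m\ge 0$, so that $T$ genuinely lies in $k[S_0]S_l$). Everything else is already in hand: the additivity of $\deg$, the domain property that makes $k[S_0]$ a genuine polynomial ring with $\deg(S_0^m)=m\,n_0$, and the non-emptiness of the $\widehat\Z_l(P)$ guaranteed through Lemma~\ref{submonoide}.
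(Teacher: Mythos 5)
Your proof is correct and takes essentially the same route as the paper's: an induction on $\deg Q$, using the dimension bound of Theorem~\ref{estructura del centralizador de un elemento homogeneo} applied to $\ell(P)$ to cancel the leading term of $Q$ against an equal-degree element $S_0^m S_l$ (with $m\ge 0$ guaranteed by the minimality of $\deg S_l$), and with the base case settled via Remark~\ref{R_0=1}. The only difference is that you verify the directness of the sum explicitly, through the distinct residue classes of degrees modulo $n_0$, a point the paper treats as clear.
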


\begin{proof} It is clear that
$$
k[S_0]\oplus k[S_0]S_1\oplus \cdots \oplus k[S_0]S_{n_0-1}\subseteq \Z(P).
$$
Consider $R\in \Z(P)\setminus\{0\}$. We will prove by induction on $\deg R $ that $R$ is contained in the right side of the equality~\eqref{eq1}. If $\deg R =0$, then by item~(1) of Theorem~\ref{centralizador} and Remark~\ref{R_0=1}, we have $R\in k\subseteq k[S_0]$. Otherwise, there exist $0\le r<n_0$ and $l\in \mathds{N}_0$ such that
$$
\deg(S_0^l S_r) = \deg S_r + l n_0=\deg R.
$$
Hence, by Theorem \ref{estructura del centralizador de un elemento homogeneo}, we can find
$\lambda\in k$ such that $\lambda \ell(S_0^l S_r)=\ell(R)$. Then, by inductive hypothesis, $R-\lambda S_0^l S_r$ belongs to the right side of~\eqref{eq1}, since
$$
R-\lambda S_0^l S_r = 0\quad\text{or}\quad \deg(R-\lambda S_0^l S_r)<\deg R.
$$
This finishes the proof.
\end{proof}

The $S_i$'s in the previous corollary can be chosen monic (i.e., with $\ell_c(S_i)=1$), and we will do it so from now on.

\smallskip

By~\cite[Corollary 4.5]{D} we know that $\Z(P)$ is a commutative algebra, and so
$$
\partial(T(S)) = T'(S)\partial(S)
$$
for any derivation $\partial:\Z(P)\to \Z(P)$ and any $T\in k[S]$ with $S\in \Z(P)$.

\begin{lemma}\label{grado de la derivada de S_i} Let $P\in W_+$ and let
$$
\Z(P)=k\oplus k[S_0]S_0\oplus k[S_0]S_1\oplus \cdots \oplus k[S_0]S_{n_0-1}
$$
be as in Corollary~\ref{Z(P) como suma directa}. Let
$$
\partial:\Z(P)\to \Z(P)
$$
be a derivation and set $J:=\{r: \partial (S_r)\ne 0\}$. Then
$$
\deg S_r-\deg\partial (S_r)=\deg S_t -\deg\partial (S_t)
$$
for all $r,t\in J$.
\end{lemma}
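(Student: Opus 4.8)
The plan is to pass to the associated graded algebra of $\Z(P)$ with respect to the degree filtration and to recognize the symbol of $\partial$ as a homogeneous derivation of a one–variable semigroup algebra, which is then rigidly determined. To begin I would record a global order bound for $\partial$. Put $\delta_r:=\deg S_r-\deg\partial(S_r)$ for $r\in J$ and $N:=\min_{r\in J}\delta_r$. Writing an arbitrary $Q\in\Z(P)\setminus\{0\}$ in the normal form $Q=g(S_0)+\sum_{r=1}^{n_0-1}f_r(S_0)S_r$ supplied by Corollary~\ref{Z(P) como suma directa}, and applying the Leibniz rule together with the chain rule $\partial(T(S_0))=T'(S_0)\partial(S_0)$, each summand of $\partial(Q)$ is of the form $f'(S_0)\partial(S_0)S_r$ or $f(S_0)\partial(S_r)$; using that $\deg$ is additive on products and that $\deg T'=\deg T-\deg S_0$, every such summand has degree equal to the degree of the corresponding piece of $Q$ diminished by $\delta_0$ or by $\delta_r\ge N$. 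Since each piece of $Q$ has degree at most $\deg Q$, this yields $\deg\partial(Q)\le\deg Q-N$ for all $Q$.

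Next I would form the associated graded object. Let $\Gamma:=\{\deg Q:Q\in\Z(P)\setminus\{0\}\}$, an additive submonoid of $\mathds{N}_0$ with $\gcd(\Gamma)=1$ and least positive element $n_0$. Because $\dim_k\bigl(\Z(\ell(P))\cap W_j\bigr)\le 1$ by Theorem~\ref{estructura del centralizador de un elemento homogeneo}, each graded piece of the filtration is one dimensional, so $Q\mapsto\ell(Q)$ identifies $\operatorname{gr}\Z(P)$ with the leading-term algebra $A:=\bigoplus_{\gamma\in\Gamma}k\,u_\gamma$, where $u_\gamma$ is the monic leading term in degree $\gamma$ and $u_\gamma u_{\gamma'}=u_{\gamma+\gamma'}$ since $\ell(QQ')=\ell(Q)\ell(Q')$. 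Thus $A$ is the semigroup algebra $k[\Gamma]$, a commutative domain (here I use that $\Z(P)$ is commutative). By the order bound just established, $\partial$ is a filtered map of degree $-N$, so it induces a derivation $D:=\operatorname{gr}\partial$ of $A$ that is homogeneous of degree $-N$; the associated-graded formalism makes $D$ automatically well defined and Leibniz, even when the top terms of $\partial(QR)$ cancel. Moreover $D\ne 0$: choosing $r\in J$ with $\delta_r=N$ gives $D(u_{\omega_r})=\ell(\partial S_r)\ne 0$, where $\omega_r:=\deg S_r$.

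Finally I would exploit the rigidity of derivations of $A$. Since $\Gamma$ generates $\mathds{Z}$, the fraction field of $A$ is $k(t)$ with $t$ of degree $1$ and $u_\gamma=t^{\gamma}$. The derivation $D$ extends uniquely to $k(t)$, where every $k$-derivation has the form $h(t)\,d/dt$; homogeneity of degree $-N$ forces $h(t)=c\,t^{1-N}$, whence $D(u_\gamma)=c\gamma\,u_{\gamma-N}$. As $D\ne 0$ we get $c\ne 0$, and since $\operatorname{char}k=0$ this gives $D(u_\gamma)\ne 0$ for every $\gamma\in\Gamma\setminus\{0\}$. In particular $D(u_{\omega_r})\ne 0$ for each $r\in\{0,\dots,n_0-1\}$, that is $\deg\partial(S_r)=\omega_r-N$, so $\delta_r=N$ for every such $r$. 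Hence $J=\{0,\dots,n_0-1\}$ and all the $\delta_r$ coincide, which is the assertion.

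I expect the main obstacle to lie in the second step: checking cleanly that $Q\mapsto\ell(Q)$ realizes $\operatorname{gr}\Z(P)$ as the semigroup domain $k[\Gamma]$ and that $\partial$ descends to a genuine homogeneous derivation $D$ of it. The monic normalization of the $S_i$ and the global degree bound are exactly what keep the leading-term multiplication multiplicative and prevent the passage to $\operatorname{gr}$ from degenerating.
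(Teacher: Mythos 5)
Your argument is correct, but it takes a genuinely different route from the paper. The paper works entirely inside $\Z(P)$ and argues by contradiction: assuming two distinct values of $\deg S_r-\deg\partial(S_r)$, it expands $S_t^{g_r}$ via the normal form of Corollary~\ref{Z(P) como suma directa} as $\lambda_1 S_r^{g_t}+\lambda_0+\sum_i P_iS_i$ with lower-degree corrections, applies $\partial$, and checks that the degree of $\partial(S_t^{g_r})$ strictly exceeds that of every term on the right-hand side --- a purely computational degree comparison. You instead pass to the associated graded algebra of the degree filtration, and the steps you flag as delicate do go through: your bound $\deg\partial(Q)\le\deg Q-N$ makes $\partial$ a filtered map of degree $-N$; one-dimensionality of the graded pieces (Theorem~\ref{estructura del centralizador de un elemento homogeneo} applied to $\ell(P)$, together with $\ell(QQ')=\ell(Q)\ell(Q')$ and commutativity of $\Z(P)$) identifies $\operatorname{gr}\Z(P)$ with the numerical-semigroup algebra $k[\Gamma]\subseteq k[t]$; and since $\gcd(\Gamma)=1$ the semigroup contains consecutive integers $a,a+1$ for $a$ large, so the quotient rule applied to $t=u_{a+1}/u_a$ pins down the extension of $D$ to $k(t)$ as $c\,t^{1-N}\,d/dt$, whence $D(u_\gamma)=c\gamma u_{\gamma-N}$ and, in characteristic zero, $c\ne 0$ forces $D(u_\gamma)\ne 0$ for all $\gamma\in\Gamma\setminus\{0\}$. (Two small points you should make explicit: the statement is vacuous when $J=\emptyset$, so assume $J\ne\emptyset$ before setting $N$; and the inclusion $\ell(Q)\in\Z(\ell(P))$ for $Q\in\Z(P)$, immediate from multiplicativity of $\ell$, is what licenses the appeal to Theorem~\ref{estructura del centralizador de un elemento homogeneo}.) What your approach buys is that it proves strictly more than the lemma in one stroke: you obtain $\delta_r=N$ for every $r\in\{0,\dots,n_0-1\}$, hence $J=\{0,\dots,n_0-1\}$, $\ker\partial=k$, and the exact formula $\deg\partial(R)=\deg R-N$ for all $R\in\Z(P)\setminus k$ --- which is precisely the content of the subsequent Proposition~\ref{grado de R}, whose proof in the paper requires the further estimates \eqref{eq6}--\eqref{eq9}; the rigidity of derivations of $k(t)$ replaces all of that. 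What the paper's route buys is self-containedness: it never leaves $\Z(P)$ and needs no associated-graded formalism, only the normal form and elementary degree arithmetic.
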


\begin{proof} Set $g_r:=\deg S_r $ and $w_r:=\deg \partial (S_r)$. Note that $g_0=n_0$. Consider the set
$$
D := \{w_r-g_r: r\in J\}.
$$
We must prove that $\# D=1$. Assuming that this is not the case will lead us to a contradiction. Take $r,t$ such that
$$
w_t-g_t=\max D>w_r-g_r.
$$
Since $\deg(S_t^{g_r})=\deg(S_r^{g_t})$, by Theorem~\ref{estructura del centralizador de un elemento homogeneo} we know that there exist $\lambda_0,\lambda_1\in k$ and $P_i\in k[S_0]$ such that
$$
S_t^{g_r}=\lambda_1 S_r^{g_t}+\lambda_0+\sum_{i=0}^{n_0-1}P_i S_i\quad\text{with $P_iS_i= 0$ or $\deg(P_i S_i)<g_t g_r$.}
$$
This implies that
\begin{equation}\label{eq2}
U:=\deg\partial (S_t^{g_r}) \le \max\{ \deg\partial (S_r^{g_t}),\deg(P_i \partial (S_i)), \deg(\partial (P_i) S_i)\},
\end{equation}
where we only consider non zero terms $P_i \partial (S_i)$ and $\partial (P_i) S_i$ (note that $\partial(P_i)\ne 0$ only if $\partial(S_0)\ne 0$ and $P_i\not\in k$). But~\eqref{eq2} is impossible since $U$ is strictly greater than each of the terms on the right side. In order to check this, note that
\begin{align*}
\partial (S_t^{g_r})&=g_rS_t^{g_r-1}\partial (S_t),\\[3pt]
\partial (S_r^{g_t})&=g_tS_r^{g_t-1}\partial (S_r),\\[3pt]
\partial (P_i) S_i&=P_i'\partial (S_0)S_i,
\end{align*}
and hence,
\begin{align*}
&\deg\partial (S_t^{g_r}) =g_r g_t + w_t-g_t=U,\\[3pt]
&\deg\partial (S_r^{g_t}) =g_r g_t + w_r-g_r<U,\\[3pt]
&\deg(P_i\partial(S_i))= \deg P_i +w_i=\deg(P_i S_i)+w_i-g_i<g_t g_r +w_i-g_i\le U,\\[3pt]
&\deg(\partial (P_i) S_i)= \deg P_i -g_0+w_0+g_i=\deg(P_i S_i)+w_0-g_0< U.
\end{align*}
This concludes the proof.
\end{proof}

\begin{proposition}\label{grado de R} Let $P\in W_+$. If $\partial\colon Z(P)\to Z(P)$ is a non zero derivation, then $\ker \partial = k$. Moreover, using the same notation as in the previous lemma, if
$$
R = \lambda_0+\sum_{i=0}^{n_0-1} R_iS_i\, \in \, \Z(P)\setminus k,
$$
then
$$
\deg\partial (R) = \deg R + \deg \partial(S_j)-\deg S_j\quad\text{for all $j$.}
$$
\end{proposition}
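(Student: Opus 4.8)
The plan is to prove the two assertions together, reducing both to the degree‑balance result of Lemma~\ref{grado de la derivada de S_i}. Write $R=\lambda_0+\sum_{i=0}^{n_0-1}R_iS_i$ as in the statement, with $R_i\in k[S_0]$. Applying $\partial$ and using commutativity of $\Z(P)$ gives
\begin{equation*}
\partial(R)=\sum_{i=0}^{n_0-1}\bigl(R_i'\,\partial(S_0)\,S_i+R_i\,\partial(S_i)\bigr),
\end{equation*}
so every nonzero summand is of the form $R_i'\partial(S_0)S_i$ or $R_i\partial(S_i)$. I would first observe, using the degree bookkeeping recorded just before Corollary~\ref{Z(P) como suma directa} (namely $\deg T'=\deg T-\deg S_0$ for $T\in k[S_0]$), that each such summand has a degree expressible through the quantities $g_i=\deg S_i$ and $w_i=\deg\partial(S_i)$ from the previous lemma. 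Concretely $\deg\bigl(R_i\,\partial(S_i)\bigr)=\deg(R_iS_i)+w_i-g_i$ and $\deg\bigl(R_i'\partial(S_0)S_i\bigr)=\deg(R_iS_i)+w_0-g_0$, where $g_0=n_0$.

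The key step is Lemma~\ref{grado de la derivada de S_i}: for every index $r$ with $\partial(S_r)\ne0$ the difference $w_r-g_r$ takes one common value, call it $-c$; and since $\partial\ne0$ there is at least one such $r$, so $c$ is well defined. Thus \emph{every} nonzero summand of $\partial(R)$ has degree exactly $\deg(R_iS_i)-c$. Because the direct‑sum decomposition of Corollary~\ref{Z(P) como suma directa} places distinct summands $R_iS_i$ in distinct graded layers, their degrees $\deg(R_iS_i)$ are pairwise distinct; consequently the leading terms of the summands of $\partial(R)$ cannot cancel, and
\begin{equation*}
\deg\partial(R)=\max_i\deg(R_iS_i)-c=\deg R-c,
\end{equation*}
the last equality because $\deg R=\max_i\deg(R_iS_i)$ (with $\lambda_0$ contributing degree $0$) as $R\notin k$. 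Since $c=\deg S_j-\deg\partial(S_j)$ for any $j\in J$, this is precisely the claimed formula $\deg\partial(R)=\deg R+\deg\partial(S_j)-\deg S_j$.

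Finally, for $\ker\partial=k$: the displayed formula shows that if $R\notin k$ then $\deg\partial(R)=\deg R-c$ is a well‑defined integer, in particular $\partial(R)\ne0$ whenever the right‑hand side makes sense; so no element outside $k$ lies in the kernel, while every scalar plainly does. The main obstacle I anticipate is the noncancellation argument: one must be careful that the several summands contributing to $\partial(R)$ genuinely sit in distinct $\Z_l(P)$ layers so that their top terms add rather than cancel. This is exactly where the distinctness of the residues $i \bmod n_0$ in the decomposition of Corollary~\ref{Z(P) como suma directa}, combined with the single common value of $w_r-g_r$ from Lemma~\ref{grado de la derivada de S_i}, does the work—each summand is shifted by the \emph{same} amount $-c$, so the ordering of degrees is preserved and the maximum is attained by a single surviving term.
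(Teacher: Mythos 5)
There is a genuine gap --- in fact two, and together they account for almost all of the paper's actual proof. Your degree bookkeeping for the individual terms is correct and matches the paper's \eqref{eq7} and \eqref{eq8}, but your central assertion that ``every nonzero summand of $\partial(R)$ has degree \emph{exactly} $\deg(R_iS_i)-c$'' is precisely the hard point, and you assert it without argument. Each summand splits as
$$
\partial(R_iS_i)=R_i\,\partial(S_i)+R_i'\,\partial(S_0)\,S_i,
$$
and by Lemma~\ref{grado de la derivada de S_i} these two terms have the \emph{same} degree $\deg(R_iS_i)-c$. Your noncancellation argument via the distinct residues modulo $n_0$ only rules out cancellation \emph{between} summands with different $i$; it says nothing about cancellation \emph{inside} a single summand, where the two leading terms lie in the same one-dimensional graded slice (Theorem~\ref{estructura del centralizador de un elemento homogeneo}) and could a priori sum to zero. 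The paper spends the bulk of its proof on exactly this point (the claim $M\ne0$, equation~\eqref{eq9}): from $\ell(S_i^{g_0})=\ell(S_0^{g_i})$ it derives $\ell(\partial(S_i))\ell(S_0)=\frac{g_i}{g_0}\,\ell(\partial(S_0))\ell(S_i)$, and then computes that the sum of the two leading terms equals $a_n\ell(S_0)^{n-1}\ell(\partial(S_0))\ell(S_i)\bigl(n+\frac{g_i}{g_0}\bigr)$, which is nonzero because the coefficient $n+\frac{g_i}{g_0}$ is strictly positive. Without some substitute for this computation, your equality could degrade to a strict inequality (or worse), and the main theorem's argument, which uses the exact formula, would collapse.

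Second, your derivation of $\ker\partial=k$ is circular. The formula $\deg\partial(R)=\deg R-c$ presupposes that the top-degree summand has nonzero image; taking $R=S_i$ with $i\notin J$ (i.e., $\partial(S_i)=0$) gives $\partial(R)=0$, so the formula simply fails unless one already knows $J=\{0,\dots,n_0-1\}$ --- which \emph{is} the kernel statement. So you cannot deduce the kernel claim from the formula; it must be proved independently. The paper does this by writing $S_h^{g_j}=S_j^{g_h}+\lambda_0+\sum_i P_iS_i$ with strictly lower-degree corrections as in~\eqref{eq10}, applying the already-established inequality~\eqref{eq6} to bound $\deg\partial(P_iS_i)$ strictly below $\deg\partial(S_j^{g_h})$ as in~\eqref{eq11}, and concluding from~\eqref{eq12} that $\partial(S_h)\ne0$. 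Your proposal contains no counterpart to this step, and the order of quantifiers matters: the paper first proves the inequality~\eqref{eq6}, then the kernel claim, and only then upgrades the inequality to the equality via the $M\ne0$ computation.
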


\begin{proof} We first prove that
\begin{equation}
\deg\partial (R) \le \deg R + \deg \partial(S_j)-\deg S_j\quad\text{for all $j\in J$.} \label{eq6}
\end{equation}
By hypothesis $J\ne \emptyset$. As in the proof of Lemma~\ref{grado de la derivada de S_i}, we set $g_j := \deg S_j$ and $w_j :=\deg\partial (S_j)$ for all $j\in J$. Now fix $j\in J$. Since
\begin{align*}
&\deg(\partial(R))\le \max\{\deg\partial(R_iS_i):\partial(R_iS_i)\ne 0\}
\intertext{and}
&\deg(R) = \max\{\deg(R_iS_i):R_iS_i\ne 0\},
\end{align*}
it suffices to show that if $\partial (R_iS_i)\ne 0$, then
$$
\deg\partial (R_iS_i) \le \deg (R_iS_i) + w_j - g_j.
$$
By Lemma~\ref{grado de la derivada de S_i}, if $i\in J$, then
\begin{equation}
\deg(R_i\partial(S_i)) = \deg(R_iS_i) + w_i - g_i = \deg(R_iS_i) + w_j - g_j\label{eq7}
\end{equation}
and if $0\in J$ and $R_i\notin k$, then
\begin{equation}
\begin{aligned}
\deg (R_i'\partial(S_0)S_i)&=\deg R_i-g_0+w_0+g_i\\
&=\deg(R_i S_i)+w_0-g_0\\
&=\deg (R_iS_i)+w_j-g_j.
\end{aligned}\label{eq8}
\end{equation}
So,
$$
\deg\partial(R_iS_i) = \deg\bigl(R_i\partial(S_i)+R_i'\partial(S_0)S_i\bigr)\le \deg(R_iS_i) + w_j-g_j,
$$
as desired. We now prove that $\ker \partial = k$ or, equivalently, that $J=\{0,\dots,n_0-1\}$. Let $0\le h < n_0$. As in the proof of Lemma~\ref{grado de la derivada de S_i} we can write
\begin{equation}
S_h^{g_j} = \lambda_1 S_j^{g_h}+\lambda_0+\sum_{i=0}^{n_0-1} P_iS_i\quad\text{with $P_iS_i=0$ or $\deg(P_i S_i)<g_jg_h$.}\label{eq10}
\end{equation}
Note that $\ell_c(S_h)=\ell_c(S_j)=1$ and so $\lambda_1=1$. Hence, as we have seen above, if $\partial(P_iS_i) \ne 0$, then
\begin{equation}
\deg\partial (P_iS_i) \le \deg (P_iS_i) + w_j - g_j < g_jg_h+w_j-g_j = \deg \partial(S_j^{g_h}).\label{eq11}
\end{equation}
Consequently, since $j\in J$,
\begin{equation}
g_jS_h^{g_j-1}\partial(S_h) = \partial(S_h^{g_j}) = \partial(S_j^{g_h})+ \sum_{i=0}^{n_0-1} \partial(P_iS_i)\ne 0,\label{eq12}
\end{equation}
which implies that $h\in J$. It remains to prove that the equality holds in~\eqref{eq6}. For this it will be sufficient to check that for our fixed $j$ and any $i$,
\begin{equation}
\deg\partial (R_iS_i) = \deg (R_iS_i) + w_j - g_j.\label{eq9}
\end{equation}
In fact, if this is true and
$$
\deg\partial(R_hS_h) = \max\{\deg\partial(R_iS_i):\partial(R_iS_i)\ne 0\},
$$
then also
$$
\deg(R_hS_h) = \max\{\deg (R_iS_i):R_iS_i\ne 0\},
$$
and so
$$
\deg\partial(R) = \deg\partial(R_hS_h) = \deg (R_hS_h) + w_j - g_j = \deg(R) + w_j - g_j.
$$
Now we are going to prove~\eqref{eq9}. By~\eqref{eq7} and~\eqref{eq8} we are reduced to show that
$$
M:=\ell(R_i')\ell(\partial(S_0))\ell(S_i) + \ell(R_i)\ell(\partial(S_i))\ne 0\qquad\text{for all $R_i\in Z(P)\setminus k$}.
$$
Write
$$
R_i = a_n S_0^n + \cdots + a_0\quad\text{with $a_n\ne 0$.}
$$
By~\eqref{eq10}, we know that $\ell(S_i^{ g_0}) = \ell(S_0^{g_i})$ and by~\eqref{eq11} and~\eqref{eq12}, that
$$
g_0\ell(S_i^{g_0-1})\ell(\partial(S_i)) = \ell(\partial(S_i^{ g_0})) = \ell(\partial (S_0^{g_i})) = g_i\ell(S_0^{g_i-1}) \ell(\partial( S_0)).
$$
Hence, in the quotient field of $\Z(P)$,
\begin{align*}
\ell(\partial(S_i))&=\frac{g_i}{g_0}\ell(\partial(S_0))\frac{\ell(S_0)^{g_i-1}\ell(S_i)}
{\ell(S_i)^{g_0-1}\ell(S_i)}\\[3pt]
&=\frac{g_i}{g_0}\ell(\partial(S_0))\frac{\ell(S_0)^{g_i-1}\ell(S_i)}
{\ell(S_0)^{g_i}}\\[3pt]
&=\frac{g_i}{g_0}\ell(\partial(S_0))\frac{\ell(S_i)}
{\ell(S_0)},
\end{align*}
which implies that
$$
\ell(\partial(S_i))\ell(S_0)=\frac{g_i}{g_0}\ell(\partial(S_0))\ell(S_i)
$$
in $Z(P)$. Therefore, since $S_0\in W_+$ and $n>0$,
\begin{align*}
M&=\ell(R_i')\ell(\partial(S_0))\ell(S_i) + \ell(R_i)\ell(\partial(S_i))\\
&= n a_n\ell(S_0)^{n-1}\ell(\partial(S_0))\ell(S_i)+a_n \ell(S_0)^{n-1}\ell(S_0)\ell(\partial(S_i))\\
&= a_n\ell(S_0)^{n-1}\ell(\partial(S_0))\ell(S_i)\biggl(n+\frac{g_i}{g_0}\biggr)\ne 0,
\end{align*}
as desired.
\end{proof}

Let $P,Q\in W$ such that $[Q,P]=1$. Since
$$
[P,[Q,R]]=[[P,Q],R]+[Q,[P,R]]=[Q,[P,R]],
$$
$ad_Q:=[Q,-]$ defines a derivation $\partial$ from $\Z(P)$ to $\Z(P)$.

\begin{theorem} If $P,Q\in W$ satisfy $[Q,P]=1$, then $\Z(P) = k[P]$.
\end{theorem}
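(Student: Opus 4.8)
\medskip

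The plan is to exploit the derivation $\partial:=[Q,-]$ on $\Z(P)$ constructed just before the statement; its crucial feature is that $\partial(P)=[Q,P]=1$, so $\partial$ is a non-zero derivation and $1$ lies in its image. First note that $[Q,P]=1$ forces $P\notin k$. Let $\phi$ be the automorphism of $W$ determined by $\phi(X)=Y$ and $\phi(Y)=-X$; a direct check gives $v\circ\phi=\ov v$, so $\phi$ maps $\ov W_+$ into $W_+$, satisfies $[\phi(Q),\phi(P)]=\phi([Q,P])=1$, and carries $\Z(P)$ onto $\Z(\phi(P))$ and $k[P]$ onto $k[\phi(P)]$. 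Hence the case $\ov v(P)>0$ reduces to $P\in W_+$, and since $v(P)\le 0$ together with $\ov v(P)\le 0$ forces $P\in W_0$, it suffices to treat the two cases $P\in W_+$ and $P\in W_0\setminus k$.

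\medskip

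Assume $P\in W_+$ and adopt the notation of Corollary~\ref{Z(P) como suma directa} and Proposition~\ref{grado de R}: one has $\Z(P)=k[S_0]\oplus k[S_0]S_1\oplus\cdots\oplus k[S_0]S_{n_0-1}$ with $n_0=\deg S_0$ minimal in $\Z(P)\setminus k$, and each $S_j\in\wh\Z_j(P)$ of minimal degree. Since $\partial$ is non-zero, Proposition~\ref{grado de R} gives $\ker\partial=k$ (so that $J=\{0,\dots,n_0-1\}$) together with the degree identity $\deg\partial(R)=\deg R+\deg\partial(S_j)-\deg S_j$ for every $j$ and every $R\in\Z(P)\setminus k$. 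Applying this to $R=P$ and using $\partial(P)=1$ with $\deg 1=0$, I obtain the key relation $\deg\partial(S_j)=\deg S_j-\deg P$ for all $j$. As $\partial(S_j)\ne 0$ has non-negative degree, this yields $\deg S_j\ge\deg P$ for every $j$; in particular $n_0=\deg S_0\ge\deg P$, whereas the minimality of $n_0$ and $P\in\Z(P)\setminus k$ give $n_0\le\deg P$. Hence $n_0=\deg P$.

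\medskip

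The heart of the argument, and the step I expect to be the main obstacle, is to deduce $n_0=1$ from this. Suppose $n_0\ge 2$ and fix $j$ with $1\le j\le n_0-1$. The key relation gives $\deg\partial(S_j)=\deg S_j-n_0<\deg S_j$, while $\deg\partial(S_j)\equiv\deg S_j\equiv j\not\equiv 0\pmod{n_0}$; the congruence shows $\partial(S_j)\notin k$, and $\partial(S_j)\ne 0$ because $j\in J$. Thus $\partial(S_j)$ is a non-zero element of $\wh\Z_j(P)$ of degree strictly smaller than $\deg S_j$, contradicting the minimality in the choice of $S_j$. Therefore $n_0=1$, so $\Z(P)=k[S_0]$ with $\deg S_0=1$; since $\deg P=n_0=1$, the element $P$ is a polynomial of degree one in $S_0$, whence $k[P]=k[S_0]=\Z(P)$.

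\medskip

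It remains to treat $P\in W_0\setminus k$. Here $\Z(P)=k[XY]$ by Theorem~\ref{estructura del centralizador de un elemento homogeneo}, so $P=f(XY)$ for some $f\in k[Z]$; moreover $XY\in\Z(P)$ gives $\partial(XY)=g(XY)$ for some $g\in k[Z]$. The identity $1=\partial(P)=f'(XY)\,\partial(XY)=f'(XY)g(XY)$ then forces $f'$ to be a non-zero constant, so $\deg f=1$ and $k[P]=k[XY]=\Z(P)$. The delicate point throughout is the interaction between the degree function on $\Z(P)$ and its residue modulo $n_0$: the single datum $\partial(P)=1$, a degree drop of exactly $\deg P=n_0$, is precisely what rules out the minimal generators $S_j$ with $j\ne 0$ and collapses $\Z(P)$ to $k[P]$.
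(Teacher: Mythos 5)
Your proof is correct and follows essentially the same route as the paper: both hinge on the derivation $\partial=ad_Q$ with $\partial(P)=1$, the decomposition of Corollary~\ref{Z(P) como suma directa}, the degree identity of Proposition~\ref{grado de R}, and the identical minimality contradiction (producing $\partial(S_j)\in\widehat\Z_j(P)$ of degree smaller than $\deg S_j$) to force $n_0=1$. Your deviations are harmless streamlinings rather than a different method: you apply the degree identity directly to $R=P$ to obtain $\deg P=n_0$, instead of expanding $P=\lambda_0+\sum P_iS_i$ and invoking the pairwise incongruence of the $w_i$ modulo $n_0$ as the paper does; you realize the reduction from $\ov{W}_+$ via the explicit automorphism $\phi(X)=Y$, $\phi(Y)=-X$ rather than the paper's appeal to ``symmetric versions''; and in the case $P\in W_0\setminus k$ you derive the conclusion directly from $1=f'(XY)\,\partial(XY)$, whereas the paper observes via Lemma~\ref{caracterizacion de Wn} that this case is vacuous --- both treatments are valid.
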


\begin{proof} Using Lemma~\ref{caracterizacion de Wn} it is easy to check that if $P\in W_0\cap\ov{W}_0 = k[XY]$, then there is no $Q$ such that $[Q,P]=1$. So $P\in W_+\cup \ov{W}_+$. Assume $P\in W_+$. Let $\partial := ad_Q$ and let $S_i,g_i,w_i$ be as above. Since $g_i > g_0$ for $i>0$, from Lemma~\ref{grado de la derivada de S_i} it follows that $w_i>w_0$ for $i>0$. Write
$$
P=\lambda_0+\sum_{i=0}^{n_0-1} P_iS_i.
$$
By Proposition~\ref{grado de R},
$$
\deg \partial(P_iS_i)=\deg(P_iS_i)+w_i-g_i=w_i+\deg P_i\equiv w_i\pmod{n_0}
$$
for each $P_i\ne 0$. Moreover, by  Lemma~\ref{grado de la derivada de S_i}
$$
w_i\not\equiv w_j\pmod{n_0}\quad\text{for $i\ne j$,}
$$
and so
$$
0=\deg \partial(P)=\max \{w_i+\deg P_i:P_i\ne 0\}.
$$
Hence $P_i=0$ for $i>0$ since $w_i>w_0\ge 0$, and $w_0=0=\deg P_0$. Consequently $P=\lambda_0+\lambda_1 S_0$. We claim that $n_0=1$, which concludes the proof. In fact, if $n_0>1$, then $\partial(S_l)\in \Z(P)$ for $0<l<n_0$, and so, by Lemma~\ref{grado de la derivada de S_i},
$$
\deg \partial(S_l)=w_l=g_l-g_0\equiv g_l-n_0\equiv l \pmod{n_0}.
$$
Therefore
$$
\partial(S_l)\in \widehat\Z_l(P)\quad\text{and}\quad\deg \partial(S_l)<\deg S_l,
$$
which contradicts the minimality of $\deg S_l$. For $P\in \overline{W}_+$, the same argument works, using the symmetric versions of Lemma~\ref{grado de la derivada de S_i} and Proposition~\ref{grado de R}.
\end{proof}

\end{document}